\newtheorem{definition}{Definition}
\newtheorem{theorem}{Theorem}
\newtheorem{lemma}{Lemma}
\title{Positive Polynomials on closed boxes}
\author{Diniz, M. A.}
\email{marcio.alves.diniz@gmail.com}
\author{Stern, R. B.}
\email{rbstern@gmail.com}
\author{Salasar, L. E.}
\email{luis.salasar@gmail.com}
\address{Universidade Federal de S. Carlos, Dep. of Statistics, Rod. Washington Luis, km 235, S. Carlos - SP, Brazil}
\date{}
\subjclass[2010]{Primary 12D10, 26C05}
\keywords{positive polynomials, unit box, Bernstein polynomials}
\begin{document}

\maketitle

\begin{abstract}

We present two different proofs that positive polynomials on closed boxes of $\mathbb{R}^2$ can be written as bivariate Bernstein polynomials with strictly positive coefficients.
Both strategies can be extended to prove the analogous result for polynomials that are positive on closed boxes of $\mathbb{R}^n$, $n>2$.

\end{abstract}

\section{Introduction}
\label{sec:intro}

The goal of this paper is to show that real polynomials that are strictly positive on closed boxes have a representation with positive coefficients when written using Bernstein's polynomial basis.
More specifically, we will prove the result for the unit box $I=[0,1]\times[0,1]$, i. e. we present new proofs for the following theorem:
\begin{theorem}\label{theo:main}
 If $p: \mathbb{R}^{2} \rightarrow \mathbb{R}$ is such that 
 \begin{equation}\label{eq:pol}
  p(x_1, x_2) = \sum_{i=0}^{n_1} \sum_{j=0}^{n_2} a_{i,j} ~ x_1^i x_2^j
 \end{equation}
 and, for every $(x_1,x_2) \in I$, $p(x_1,x_2) > 0$, 
 then there exist $q_1 \geq n_1, q_2 \geq n_2$ and 
 $C_{i,j} > 0$, $(i,j) \in Q_1\times Q_2$, such that
 \begin{align*}
  p(x_1,x_2) = \sum_{i=0}^{q_1}\sum_{j=0}^{q_2} C_{i,j} ~ x_1^{i}(1-x_1)^{q_1-i}x_2^{j}(1-x_2)^{q_2-j},
 \end{align*}
where $Q_1=\{0,1,\ldots,q_1\}$ and $Q_2=\{0,1,\ldots,q_2\}$.
\end{theorem}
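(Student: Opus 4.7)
The plan is to compute the bivariate Bernstein coefficients $C_{kl}$ exactly in terms of the monomial coefficients $a_{ij}$ and to show that, after a suitable normalization, they converge uniformly to the values of $p$ on a rectangular grid as $q_1, q_2 \to \infty$. Combined with the fact that $p$ is bounded away from $0$ on the compact box $I$, this will force all $C_{kl}$ to be strictly positive once $q_1, q_2$ are large enough.

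First I would invoke compactness of $I$ and continuity of $p$ to fix $\varepsilon > 0$ with $p(x_1, x_2) \geq \varepsilon$ on $I$. Next, for arbitrary $q_1 \geq n_1$ and $q_2 \geq n_2$, I would expand each monomial $x_1^i x_2^j$ appearing in (\ref{eq:pol}) by inserting the trivial identities $(x_1 + (1-x_1))^{q_1 - i} \equiv 1$ and $(x_2 + (1-x_2))^{q_2 - j} \equiv 1$ and applying the binomial theorem twice. Collecting the resulting terms in the Bernstein basis yields the exact formula
\[
C_{kl} \;=\; \sum_{i=0}^{\min(k, n_1)} \sum_{j=0}^{\min(l, n_2)} a_{ij} \binom{q_1 - i}{k - i} \binom{q_2 - j}{l - j}.
\]

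The decisive step is to study the normalized coefficient $\widetilde{C}_{kl} := C_{kl} / \bigl(\binom{q_1}{k} \binom{q_2}{l}\bigr)$. The elementary identity $\binom{q - i}{k - i}/\binom{q}{k} = \prod_{r=0}^{i-1} (k - r)/(q - r)$ rewrites $\widetilde{C}_{kl}$ as a double sum of products of such ratios, each factor of which differs from $k/q_1$ or $l/q_2$ by at most $(\max(n_1,n_2)-1)/(q - \max(n_1,n_2))$. Hence $\widetilde{C}_{kl} - p(k/q_1, l/q_2) \to 0$ uniformly in $(k, l)$. Combined with $p \geq \varepsilon$, this gives $\widetilde{C}_{kl} \geq \varepsilon/2 > 0$ for $q_1, q_2$ large enough, and therefore $C_{kl} > 0$.

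The main obstacle is the \emph{uniformity} in $(k, l)$ of the error bound in the last step: even when $k$ or $l$ sits at extreme indices, $\widetilde{C}_{kl}$ must stay close to the corresponding value of $p$. The elementary telescoping inequality $|a_0 \cdots a_{n-1} - b_0 \cdots b_{n-1}| \leq \sum_r |a_r - b_r|$ for numbers in $[0, 1]$ is precisely what delivers a uniform bound, and it is the technical heart of the argument. Everything else reduces to binomial bookkeeping, so once this uniform estimate is in hand, the conclusion is immediate.
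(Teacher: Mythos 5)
Your proposal is essentially the alternative proof the paper gives in Section~\ref{sec:alt}, following Garloff and Rivlin. Your normalized quantity $\widetilde{C}_{kl} = C_{kl}/\bigl(\binom{q_1}{k}\binom{q_2}{l}\bigr)$ is exactly the Bernstein coefficient $c^{q_1,q_2}_{k,l}$ defined in \eqref{eq:coef_bern}, and your plan---show $\widetilde{C}_{kl}$ converges to $p(k/q_1, l/q_2)$ uniformly in $(k,l)$ and then invoke $p \geq \varepsilon$---is precisely the content of Theorems~\ref{theo:lambda1} and~\ref{theo:lambda2}. The paper even uses the same elementary product inequality $\prod_r(1-z_r) \geq 1 - \sum_r z_r$ that you call the ``technical heart.'' So the route is the same; the paper just carries it out to an explicit $O\bigl(n^2/q\bigr)$ rate rather than the $O\bigl(n/q\bigr)$ rate your coarser per-factor bound suggests.

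One small caution about your telescoping step: the inequality $|a_0\cdots a_{i-1} - b_0\cdots b_{i-1}| \leq \sum_r |a_r - b_r|$ needs all the $a_r, b_r$ in $[0,1]$, but when $k < i$ the factors $(k-r)/(q_1-r)$ for $r > k$ are negative, so it does not apply as stated. That case is harmless---the product is then exactly $0$ (the $r=k$ factor vanishes, matching $\binom{k}{i}=0$) and one bounds $(k/q_1)^i$ directly, which is what the paper does explicitly---but your sketch glosses over it, and a careful write-up should split it off just as the paper splits the proof of \eqref{eq:delta.ineq1} into the cases $k<i$ and $k\geq i$. You should also be aware the paper gives a second, quite different proof (Section~\ref{sec:main}) that iterates the univariate Powers--Reznick positivity result coordinate by coordinate; that approach is not what you chose, but it yields explicit formulas for admissible $q_1, q_2$.
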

Furthermore, we constructively derive the values of $q_{1}$ and $q_{2}$.

Theorem \ref{theo:main} is an extension of similar results obtained for positive polynomials on compact intervals and multidimensional simplexes by, respectively, Bernstein \cite{bernstein1915}, Hausdorff \cite{hausdorff1921} and P\'olya \cite{polya1928}.
We are aware that, using a different proof strategy, Cassier \cite{cassier1984} has proven a general result from which a similar version of Theorem \ref{theo:main} follows.
We discuss this more extensively at the final section.

We provide two proofs of Theorem \ref{theo:main}.
The first one is supported by results for the univariate version of Theorem \ref{theo:main}, proved by Powers and Reznick \cite{powers2000}.
The second proof extends the approach in Garloff \cite{garloff1986} and Rivlin \cite{rivlin1970}.

The paper is organized as follows.
Section \ref{sec:def} establishes notation and brings the relevant definitions used in the paper.
In Section \ref{sec:aux} we present the auxiliary results.
These results are used in one of the proofs of Theorem \ref{theo:main}, given in Section \ref{sec:main}.
Section \ref{sec:alt} brings an alternative proof, 
based on \cite{garloff1986} and \cite{rivlin1970}.

\section{Definitions and notation}
\label{sec:def}

\begin{definition} \
 Let $\mathcal{P}_{n}$ be the linear space of polynomials of degree $n$, i.e.

\begin{center}
$\mathcal{P}_{n} = \{p: \mathbb{R} \rightarrow \mathbb{R}, 
	       \text{ where } \exists a_{i} \in \mathbb{R}, 0 \leq i \leq n : p(x) = \sum_{i=0}^{n}{a_{i} x^{i}}\}$.	
\end{center}
\end{definition}

\begin{definition}
For any $p \in \mathcal{P}_{n}$ we define its Goursat transform $\tilde{p}$ by

\[\tilde{p}(x) = (2x)^{n}p\left(\frac{1-x}{x}\right).\]
\end{definition}

\begin{definition}
 Let $\mathcal{B}_{n}^{+}$ be the set of polynomials of degree $n$ that can be written with non-negative coordinates in the Bernstein basis,
\end{definition}

\begin{center}
$\mathcal{B}_{n}^{+} = \{p \in \mathcal{P}_{n}, 
	       \text{ where } \exists A_{i} \geq 0:  
				 p(x) = \sum_{i=0}^{n}{A_{i} ~ x^{i}(1-x)^{n-i}}\}.$
\end{center}

Similarly, let $\mathcal{B}_{n}^{+,*}$ be the set of polynomials of degree $n$ that 
can be written with positive coordinates in the Bernstein basis,

\begin{center}
$\mathcal{B}_{n}^{+,*} = \{p \in \mathcal{P}_{n}, 
	       \text{ where } \exists A_{i} > 0: 
				 p(x) = \sum_{i=0}^{n}{A_{i} ~ x^{i}(1-x)^{n-i}}\}.$
\end{center}

\begin{definition}
 \label{def:bernstein_coef}
 For every $a = (a_1, \ldots, a_n) \in \mathbb{R}^{n}$, $m \geq n$ and $0 \leq i \leq m$, let
 \begin{align*}
  A_{i,m}(a) = \sum_{j = 0}^{\min(n, i)}{m - j \choose m - i} a_j.
 \end{align*}
\end{definition}

\begin{definition}
  \label{def:gorsat_coef}
 For every $a = (a_1, \ldots, a_n) \in \mathbb{R}^{n}$, let
 \begin{align*}
  B_{k}(a) = \sum_{(i,j) \in \mathbb{N}^{2}: i-j=n-k}{\left(2^{n}(-1)^{j}{i \choose j}a_{i}\right)}.
 \end{align*}
 Notice that $B_{k}(a)$ is a linear combination of $a$.
\end{definition}

\begin{definition}
 \label{def:fixed_coef}
  For each $0 \leq i \leq n_{1}$ and $0 \leq j \leq n_{2}$,
  let $a_{i,j} \in \mathbb{R}$. For each $0 \leq i \leq n_{1}$,
  define $a_{i}: \mathbb{R} \rightarrow \mathbb{R}$ as
 \begin{align*}
  a_i(x_2) = \sum_{j=0}^{n_2} a_{i, j} \, x_2^{\, j}.
 \end{align*}
  Also define $a(x_{2}) = (a_{0}(x_{2}),\ldots,a_{n_{1}}(x_{2}))$.
\end{definition}

\begin{definition}
 \label{def:fixed_coef1}
 For each $0 \leq i \leq n_{1}$ and $0 \leq j \leq n_{2}$,
 let $a_{i,j} \in \mathbb{R}$. For each $m \geq n_{1}$
 and $0 \leq k \leq n_{2}$, define
 $$b_{k,i,m}(a) = \sum_{j=0}^{\min(n_1,i)}{{m-j \choose m-i}a_{j,k}}.$$
 Also define $b_{i,m}(a) = (b_{0,i,m}(a),\ldots,b_{n_{2},i,m}(a))$.
\end{definition}

\section{Auxiliary results}
\label{sec:aux}


\begin{lemma} \label{lemma_Bernstein}
If $p \in \mathcal{P}_{n}$, $p(x)  = \sum_{i=0}^{n}{a_{i}x^{i}}$, then, for every $m \geq n$, 
 \begin{equation*}
  p(x) = \sum_{i=0}^{m}{A_{i} ~ x^{i}(1-x)^{m-i}}
 \end{equation*}
 if and only if 
 \begin{align}
  \label{eq:polya1}
  A_i = A_{i,m}(a), \quad a = (a_1, \ldots, a_n).
 \end{align}
\end{lemma}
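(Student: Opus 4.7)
The plan is to prove the identity by expanding each monomial in the Bernstein basis of degree $m$, then to obtain the ``only if'' direction via uniqueness of coordinates in a basis.

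The starting point is the trivial identity $1 = x + (1-x)$. For every $0 \leq j \leq n$, the binomial theorem yields
\begin{align*}
x^j \;=\; x^j\bigl(x + (1-x)\bigr)^{m-j}
\;=\; \sum_{k=0}^{m-j} \binom{m-j}{k}\, x^{j+k}(1-x)^{m-j-k}.
\end{align*}
Substituting $i = j+k$ and using $\binom{m-j}{i-j} = \binom{m-j}{m-i}$, this rewrites as
\begin{align*}
x^j \;=\; \sum_{i=j}^{m} \binom{m-j}{m-i}\, x^{i}(1-x)^{m-i}.
\end{align*}

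Next I would plug this expression into $p(x) = \sum_{j=0}^{n} a_j x^j$ and interchange the order of summation. The inner range $i \in \{j,\ldots,m\}$ together with the outer range $j \in \{0,\ldots,n\}$ becomes $i \in \{0,\ldots,m\}$ and $j \in \{0,\ldots,\min(n,i)\}$. This gives
\begin{align*}
p(x) \;=\; \sum_{i=0}^{m}\left(\sum_{j=0}^{\min(n,i)} \binom{m-j}{m-i}\, a_j\right) x^{i}(1-x)^{m-i}
\;=\; \sum_{i=0}^{m} A_{i,m}(a)\, x^{i}(1-x)^{m-i},
\end{align*}
which is exactly the ``if'' direction of the statement.

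For the ``only if'' direction, I would invoke the standard fact that the Bernstein polynomials $\{x^{i}(1-x)^{m-i}\}_{i=0}^{m}$ form a basis of $\mathcal{P}_m$. Hence coordinates in this basis are unique, so any representation $p(x) = \sum_{i=0}^{m} A_i\, x^{i}(1-x)^{m-i}$ must agree coefficient-by-coefficient with the one just constructed, forcing $A_i = A_{i,m}(a)$. There is no real obstacle here; the only point requiring a little care is the book-keeping of summation ranges in the swap, and stating (or referencing) the linear independence of the Bernstein basis to close the uniqueness step.
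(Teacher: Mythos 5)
Your proposal is correct and takes essentially the same approach as the paper: expand each monomial $x^j$ via the binomial theorem applied to $(x+(1-x))^{m-j}$, swap the order of summation, and close the ``only if'' direction by uniqueness of coordinates in the Bernstein basis of $\mathcal{P}_m$. Your write-up is slightly more explicit about the index bookkeeping in the summation swap, but the argument is the same.
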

\begin{proof}
Applying the Binomial Theorem to the identity $x^i = x^i (1 - x + x)^{m - i}$, it follows that
$$
x^i = \sum_{j = i}^m {m - i \choose j - i} x^j (1 - x)^{m-j}.
$$
From this expression, we obtain that
$$ p(x) = \sum_{i=0}^{m}{A_{i,m}(a) ~ x^{i}(1-x)^{m-i}}.$$
The proof that the $A_{i}$'s are unique follows from observing that,
 $\{x^{i}(1-x)^{m-i}: 0 \leq i \leq m\}$ is a basis for $\mathcal{P}_{m}$.
\end{proof}

The following theorem is a consequence of Theorem 6 in \cite{powers2000}.

\begin{theorem}
 \label{theo:pos1}
 Let $p \in \mathcal{P}_{n}$ be such that $p(x) > 0$ for all $x \in [0,1]$. Let $\lambda = \min_{x \in [0,1]}p(x)$ and 
 $e_{j}$ be such that $\tilde{p}(x) = \sum_{i=0}^{n}{e_{j}x^{j}}$.
 If $q \geq 3n + \frac{2n^{2}\max_{j}{|e_{j}|}}{\lambda} + 1$,
 then $p \in \mathcal{B}_{q}^{+}$.
\end{theorem}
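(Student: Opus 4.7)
The plan is to obtain Theorem~\ref{theo:pos1} as a direct consequence of Theorem~6 of \cite{powers2000}. Powers and Reznick prove an effective, quantitative univariate Polya-type representation: for a polynomial strictly positive on $[0,1]$, they exhibit an explicit threshold $q_{0}=q_{0}(p)$ such that $p \in \mathcal{B}_{q}^{+}$ whenever $q \geq q_{0}$, with $q_{0}$ built from the infimum $\lambda$ of $p$ on $[0,1]$ together with the monomial coefficients of the Goursat transform $\tilde p$. My task is therefore largely to verify that the cleaner expression $3n + \frac{2n^{2}\max_j|e_j|}{\lambda} + 1$ dominates their threshold, and to translate their conclusion into the statement about $\mathcal{B}_{q}^{+}$ that we use here.

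I would proceed in three steps. First, I would restate Theorem~6 of \cite{powers2000} in its original form and identify the role of each quantity appearing in its bound. The Goursat transform is the natural bridge: the change of variable $x \mapsto (1-x)/x$ composed with the prefactor $(2x)^n$ transports the problem of non-negativity of the Bernstein coefficients of $p$ on $[0,1]$ into a Polya-type problem for $\tilde p$, so it is precisely the monomial coefficients $e_j$ of $\tilde p$ and the infimum $\lambda$ that the quantitative Polya estimates control.

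Second, I would weaken Powers and Reznick's bound, which is stated in a more intricate combinatorial form involving individual binomials and explicit powers of $2$, to the uniform expression in our statement. This is pure bookkeeping, using crude estimates such as $\binom{n}{j}\leq n^{j}$ or $\binom{n}{j}\leq 2^{n}$ to replace combinatorial factors by simple polynomial functions of $n$, until the threshold is visibly bounded above by $3n + \frac{2n^{2}\max_j|e_j|}{\lambda} + 1$. Having done so, the conclusion follows by combining Powers and Reznick's conclusion with Lemma~\ref{lemma_Bernstein}: for $q$ at least the threshold, the unique degree-$q$ Bernstein coefficients of $p$ are $A_{i,q}(a)\geq 0$, which is precisely the statement $p \in \mathcal{B}_{q}^{+}$.

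The main obstacle is the constant-matching in the second step. No new mathematical idea is needed beyond \cite{powers2000}, but one must be careful not to lose so much in the crude estimates that the final bound fails to majorize the genuine Powers-Reznick threshold; in particular, the linear-in-$n$ correction of $3n+1$ and the coefficient $2n^{2}$ in front of $\max_j|e_j|/\lambda$ need to be tracked explicitly through the combinatorial estimates.
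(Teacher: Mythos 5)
Your proposal takes the same approach as the paper, which in fact offers no proof of Theorem~\ref{theo:pos1} at all: the paper simply asserts that the result is ``a consequence of Theorem 6 in \cite{powers2000}.'' Your plan to restate Powers and Reznick's bound and then check that $3n + \frac{2n^{2}\max_{j}|e_{j}|}{\lambda} + 1$ dominates their threshold (accounting for the change of interval and the Goursat normalization) is precisely the bookkeeping that the paper's one-line citation leaves implicit, so you have correctly identified both the source theorem and the translation step.
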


\begin{lemma}
 \label{lemma:pos2}
 Let $p \in \mathcal{P}_{n}$ be such that $p(x) > 0$ for all $x \in [0,1]$. Let $\lambda = \min_{x \in [0,1]}p(x)$ and 
 $e_{j}$ be such that $\tilde{p}(x) = \sum_{i=0}^{n}{e_{j}x^{j}}$.
 Let $q = 3n + \lceil\frac{2n^{2}\max_{j}{|e_{j}|}}{\lambda}\rceil + 1$, where $\lceil y \rceil = \min \{n \in \mathbb{N}: n \geq y\}$. Then, for every $q^{*} \geq 2q$, 
 $p \in \mathcal{B}_{q^{*}}^{+,*}$.
\end{lemma}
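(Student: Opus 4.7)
My plan is to lift the non-negative representation provided by Theorem \ref{theo:pos1} to a strictly positive one at higher degree via Bernstein degree elevation. First, I would apply Theorem \ref{theo:pos1} with the value of $q$ prescribed in the statement: this yields constants $A_{0}, \ldots, A_{q} \geq 0$ with $p(x) = \sum_{i=0}^{q} A_{i}\, x^{i}(1-x)^{q-i}$. Evaluating at the endpoints shows $A_{0} = p(0) > 0$ and $A_{q} = p(1) > 0$, while the interior $A_{i}$ could vanish, so in general $p \notin \mathcal{B}_{q}^{+,*}$ and a single degree is not enough.

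Next, I would fix $q^{*} \geq 2q$, set $k = q^{*} - q \geq q$, and multiply each basis element $x^{i}(1-x)^{q-i}$ by the identity $((1-x)+x)^{k}$, expanding with the binomial theorem. Collecting like powers yields the standard degree-elevation identity
\[
p(x) = \sum_{j=0}^{q^{*}} A'_{j}\, x^{j}(1-x)^{q^{*}-j}, \qquad A'_{j} = \sum_{i=\max(0,\,j-k)}^{\min(q,\,j)} \binom{k}{j-i}\, A_{i}.
\]

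Finally, I would verify that $A'_{j} > 0$ for every $j \in \{0, 1, \ldots, q^{*}\}$. Since every $A_{i}$ is non-negative, it suffices to exhibit, for each $j$, at least one index in the summation range whose binomial weight and $A_{i}$ are both positive. The index $i = 0$ lies in the range exactly when $j \leq k$ and contributes $\binom{k}{j} A_{0} > 0$; the index $i = q$ lies in the range exactly when $q \leq j \leq q + k = q^{*}$ and contributes $\binom{k}{j-q} A_{q} > 0$. Because $k \geq q$, the sets $\{j : j \leq k\}$ and $\{j : j \geq q\}$ together cover $\{0, 1, \ldots, q^{*}\}$, so every $A'_{j}$ is strictly positive and $p \in \mathcal{B}_{q^{*}}^{+,*}$. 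The only delicate step is deriving the degree-elevation formula with the right index ranges; there is no deeper obstacle, since the slack $q^{*} \geq 2q$ is precisely what allows the two positive endpoint coefficients $A_{0}$ and $A_{q}$ to reach every new index through the combinatorial weights.
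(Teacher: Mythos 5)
Your proposal is correct and follows essentially the same route as the paper: apply Theorem~\ref{theo:pos1} to obtain a non-negative Bernstein representation at degree $q$, perform degree elevation by multiplying with $(x+(1-x))^{q^*-q}$, and observe that the strictly positive endpoint coefficients $A_0=p(0)$ and $A_q=p(1)$, combined with $q^* - q \geq q$, guarantee that every elevated coefficient receives a strictly positive contribution. The paper phrases this as a lower bound of $\min(A_0,A_q)$ on each new coefficient, while you exhibit the covering index sets explicitly, but the argument is the same.
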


\begin{proof}
It follows from Theorem \ref{theo:pos1} that
there exist $A_{i} \geq 0$ such that
$$p(x) = \sum_{i=0}^{q}{A_{i}x^{i}(1-x)^{q-i}}.$$
Note that
\begin{align*}
  p(x)	&= \sum_{i=0}^{q}{A_{i}x^{i}(1-x)^{q-i}}																																						\\
				&= \sum_{i=0}^{q}{A_{i}x^{i}(1-x)^{q-i}(x + 1-x)^{q^{*}-q}}																														\\
				&= \sum_{i=0}^{q}{A_{i}x^{i}(1-x)^{q-i}\sum_{j=0}^{q^{*}-q}{{q^{*}-q \choose j}x^{j}(1-x)^{q^*-q-j}}}										\\
				&= \sum_{k=0}^{q^{*}}{\left(\sum_{l=\max(0,k+q-q^{*})}^{\min(q,k)}{{q^{*}-q \choose k-l}A_{l}}\right)x^{k}(1-x)^{q^{*}-k}}.
\end{align*}

Observe that, for every $k$, 
$\sum_{l=\max(0,k + q - q^{*})}^{\min(q,k)}{{q^{*}-q \choose k-l}A_{l}} \geq \min(A_{0},A_{q}) > 0$, since $A_0 = p(0) > 0$ and $A_q = p(1) > 0$. Therefore, $p \in \mathcal{B}_{q^{*}}^{+,*}$.
\end{proof}

\begin{lemma}
 \label{lemma:Goursat}
 If $p(x) = \sum_{i=0}^{n}{a_{i}x^{i}}$ and $a = (a_1, \ldots, a_n)$,
 then $$\tilde{p}(x) = \sum_{k=0}^{n}{B_{k}(a)x^{k}}.$$
\end{lemma}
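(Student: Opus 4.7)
The plan is to start directly from the definition $\tilde{p}(x) = (2x)^n p((1-x)/x)$, push the factor $(2x)^n$ into the sum defining $p$ to clear denominators, then apply the binomial theorem to the resulting factors of $(1-x)^i$, and finally re-index the double sum so as to collect the coefficient of each $x^k$.

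Concretely, first I would write
\begin{align*}
\tilde{p}(x) = (2x)^n \sum_{i=0}^n a_i \left(\frac{1-x}{x}\right)^i = 2^n \sum_{i=0}^n a_i \, x^{n-i} (1-x)^i.
\end{align*}
Then I would expand $(1-x)^i = \sum_{j=0}^{i}(-1)^j \binom{i}{j} x^j$ to obtain the double sum
\begin{align*}
\tilde{p}(x) = \sum_{i=0}^n \sum_{j=0}^{i} 2^n (-1)^j \binom{i}{j} a_i \, x^{n-i+j}.
\end{align*}
Setting $k = n-i+j$, so that $i-j = n-k$, the coefficient of $x^k$ becomes exactly
\begin{align*}
\sum_{(i,j)\in \mathbb{N}^2 : \, i-j=n-k} 2^n (-1)^j \binom{i}{j} a_i = B_k(a),
\end{align*}
which is the desired identity.

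The only thing to verify is that the reindexing is clean: the original constraints $0 \leq i \leq n$ and $0 \leq j \leq i$ force $0 \leq k \leq n$, and conversely every pair $(i,j)$ with $i-j = n-k$, $0 \leq j$, $i \leq n$ is hit exactly once. Since $\binom{i}{j}$ vanishes for $j > i$ or $j < 0$, one can either impose the ranges explicitly or rely on this convention; either way the sum defining $B_k(a)$ agrees with the collected coefficient. Thus there is no serious obstacle, only careful accounting of indices.
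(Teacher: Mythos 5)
Your proof follows exactly the same route as the paper's: substitute the definition of the Goursat transform, clear denominators with the factor $(2x)^n$, expand $(1-x)^i$ by the binomial theorem, and reindex with $k = n-i+j$ to collect the coefficient $B_k(a)$. The extra remark about index ranges is a reasonable bit of bookkeeping but does not change the argument.
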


\begin{proof}
 \begin{align*}
  \tilde{p}(x)	&= (2x)^{n} p\left(\frac{1-x}{x}\right)										\\
			    &= (2x)^{n} \sum_{i=0}^{n}{a_{i}\left(\frac{1-x}{x}\right)^{i}}				\\
				&= \sum_{i=0}^{n}{2^{n}a_{i}(1-x)^{i} x^{n-i}}								\\
				&= \sum_{i=0}^{n}{2^{n}a_{i}\sum_{j=0}^{i}{i \choose j}(-1)^{j}x^{n-i+j}}	\\
				&= \sum_{k=0}^{n}{\sum_{(i,j): i-j=n-k}{\left(2^{n}(-1)^{j}{i \choose j}a_{i}\right)}x^{k}} = \sum_{k=0}^{n}{B_{k}(a)x^{k}}.
 \end{align*}
\end{proof}
\section{Proof of Theorem \ref{theo:main}}
\label{sec:main}

The main idea behind this proof is to use twice the positive representation result for univariate polynomials (lemma \ref{lemma:pos2}).
For every fixed value in one of the coordinates of a bivariate polynomial, the function of the free coordinate is a 
univariate polynomial. This polynomial admits a positive Bernstein representation.
Furthermore, the coefficients of this representation are univariate polynomials on the coordinate that was fixed, allowing another application of the positive Bernstein representation theorem for univariate polynomials.
As a result of both applications, a positive Bernstein representation for the bivariate polynomial is obtained.
This strategy can be extended by induction to arbitrary $n$-variate polynomials.



\begin{proof}
 For a given $x_2 \in [0,1]$, obtain from definition \ref{def:fixed_coef} that
 \begin{align*}
  p_{x_2}(x_1) = p(x_1, x_2) = \sum_{i=0}^{n_1} a_i(x_2) x_1^i,
 \end{align*}
Thus, $p_{x_2} \in \mathcal{P}_{n_{1}}$ and $p_{x_2}(x_1) > 0$  for all $x_1 \in [0,1]$.
From this observation, one can obtain two facts. 
First, since $I$ is compact, then $\lambda =  \inf_{(x_1,x_2) \in I^2} p(x_1,x_2) > 0$ and
\begin{equation}
\lambda_{x_2} = \inf_{x_1 \in [0,1]} p_{x_2}(x_1) \geq \lambda > 0 \label{powers_condition_1}. 
\end{equation}

Second, it follows from Lemma \ref{lemma:Goursat} that
\begin{align*}
\tilde{p}_{x_2}(x_1) &:= \sum_{i=0}^{n_{1}}{B_{i}(a(x_2))x_1^{i}}.
\end{align*}
Since each $B_{i}$ is a linear combination of the elements of $a(x_2)$
 and each element of $a(x_2)$ is a polynomial on $x_2$,
 $B_{i}(a(x_2))$ is a polynomial on $x_2$.
 Since $[0,1]$ is compact, there exists $L < \infty$ such that
 \begin{align}
  \label{powers_condition_2}
	\sup_{x_2 \in [0,1]}{\max_{i}{|B_{i}(a(x_2))|}} = L.
 \end{align}
 Therefore, it follows from Lemma \ref{lemma:pos2} 
 and Equations \eqref{powers_condition_1} and \eqref{powers_condition_2} that,
 taking $q_{1} = 2\left(3n_1 + \left\lceil\frac{2n_1^{2}\sup_{x_2 \in [0,1]}{\max_{i}{|B_{i}(a(x_2))|}}}{\inf_{(x_1,x_2) \in I^2} p(x_1,x_2)}\right\rceil + 1\right)$,
one obtains that, for all $x_{2} \in [0,1]$, $p_{x_2} \in \mathcal{B}_{q_{1}}^{+,*}$. 
Therefore, it follows from Lemma \ref{lemma_Bernstein} that,
 for all $x_2 \in [0,1]$,
 \begin{align}
  \label{eqn:recursion_1}
  p(x_1,x_2) = p_{x_2}(x_1)	&= \sum_{i=0}^{q_{1}}{A_{i, q_1}(a(x_2))x_1^{i}(1-x_1)^{q_1-i}}
 \end{align}
 where $A_{i, q_1}(a(x_2)) > 0$. Notice that
 \begin{align*}
  A_{i, q_1}(a(x_2)) &= \sum_{j=0}^{\min(n_1,i)}{{q_1-j \choose q_1-i}a_j(x_2)}	\\
  					 &= \sum_{j=0}^{\min(n_1,i)}{{q_1-j \choose q_1-i}\sum_{k=0}^{n_2}{a_{j,k}x_2^k}}	\\
                     &= \sum_{k=0}^{n_2}{\left(\sum_{j=0}^{\min(n_1,i)}{{q_1-j \choose q_1-i}a_{j,k}}\right)x_2^k} = \sum_{k=0}^{n_2}{b_{k,i,q_1}(a)x_{2}^{k} \in \mathcal{P}_{n_{2}}}
 \end{align*}
 
 It follows from Lemma \ref{lemma:pos2} that,
 taking $q_{2} = 2\left(3n_2 + \max_i\left\lceil\frac{2n_2^{2}\max_{j}{|B_{j}(b_{i,q_{1}}(a))|}}{\inf_{x_2 \in I} A_{i, q_1}(a(x_2))}\right\rceil + 1\right)$,
one obtains that
 \begin{align}
  \label{eqn:recursion_2}
  A_{i, q_1}(a(x_2)) &= \sum_{j=0}^{q_{2}}{C_{i,j} ~ x_2^{j}(1-x_2)^{q_{2}-j}} \text{, } 0 \leq i \leq q_{1}
 \end{align}
 where $C_{i,j} > 0$. By applying Equation \eqref{eqn:recursion_2} to 
 Equation \eqref{eqn:recursion_1}, one obtains
 \begin{align*}
  p(x_1,x_2) &= \sum_{i=0}^{q_{1}}{\sum_{j=0}^{q_{2}}{C_{i,j} ~ x_1^{i}(1-x_1)^{q_1-i}x_2^{j}(1-x_2)^{q_{2}-j}}.}
 \end{align*}
\end{proof}


\section{Alternative proof}
\label{sec:alt}

We consider, as before, the bivariate polynomial $p$ given in \eqref{eq:pol} and $\lambda = \inf_{(x_1,x_2) \in I} p(x_1,x_2)$.
For $q_1, q_2 \geq 1$, let us define the bivariate polynomial

\begin{equation}\label{eq:biv_bern}
b_{k,l}^{(q_1,q_2)}(x_1,x_2) = {q_1\choose k} x_1^k (1-x_1)^{q_1-k} {q_2 \choose l} x_2^l(1-x_2)^{q_2-l},
\end{equation}
where $k \in Q_1$ and $l \in Q_2$. The set of polynomials $\{b_{k,l}^{(q_1,q_2)}(x_1, x_2), k \in Q_1, l \in Q_2\}$ are the Bernstein polynomials of degree $q_1$ and $q_2$ and form a basis for the linear space of all bivariate polynomials of the form $\eqref{eq:pol}$ with $n_1 = q_1$ and $n_2 = q_2$.

\begin{lemma} \label{lemma:xy}
If $i \in Q_1$ and $j \in Q_2$, then
\begin{equation}\label{eq:biv_degree}
x_1^ix_2^j=\sum_{k=0}^{q_1}\sum_{l=0}^{q_2}\frac{{k \choose i}{l \choose j}}{{q_1 \choose i}{q_2 \choose j}}b_{k,l}^{(q_1,q_2)}(x_1,x_2),
\end{equation}
where it is assumed that ${m \choose v} = 0$ for integers $m$ and $v$ such that $m < v$.
\end{lemma}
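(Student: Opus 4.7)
The plan is to reduce the bivariate identity to the product of two univariate ones, exploiting the fact that the basis element $b_{k,l}^{(q_1,q_2)}(x_1,x_2)$ factors as $\binom{q_1}{k}x_1^k(1-x_1)^{q_1-k}\cdot\binom{q_2}{l}x_2^l(1-x_2)^{q_2-l}$. So the first task is to establish, for each $i \leq q$, the univariate identity
\[
x^{i} \;=\; \sum_{k=0}^{q}\frac{\binom{k}{i}}{\binom{q}{i}}\binom{q}{k}\,x^{k}(1-x)^{q-k}.
\]

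To prove this univariate identity I would invoke Lemma \ref{lemma_Bernstein} with $p(x)=x^{i}\in\mathcal{P}_{i}$, whose coefficient vector has $a_{i}=1$ and all other entries zero. Applied at degree $m=q$, the lemma gives $A_{k}=A_{k,q}(a)=\binom{q-i}{q-k}$ when $k\ge i$ and $A_{k}=0$ otherwise, so that $x^{i}=\sum_{k=i}^{q}\binom{q-i}{q-k}x^{k}(1-x)^{q-k}$. It then remains to match coefficients by noting the subset-of-a-subset identity $\binom{q}{k}\binom{k}{i}=\binom{q}{i}\binom{q-i}{k-i}=\binom{q}{i}\binom{q-i}{q-k}$, together with the convention $\binom{k}{i}=0$ for $k<i$, which collapses the sum from $k=0$ to $q$ into the correct sum starting at $k=i$.

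Once the univariate identity is in hand, the lemma follows by multiplying the identities for $x_1^{i}$ (at degree $q_1$) and $x_2^{j}$ (at degree $q_2$):
\[
x_1^{i}x_2^{j} \;=\; \sum_{k=0}^{q_1}\sum_{l=0}^{q_2}\frac{\binom{k}{i}\binom{l}{j}}{\binom{q_1}{i}\binom{q_2}{j}}\binom{q_1}{k}\binom{q_2}{l}\,x_1^{k}(1-x_1)^{q_1-k}x_2^{l}(1-x_2)^{q_2-l},
\]
and recognizing the trailing factors as $b_{k,l}^{(q_1,q_2)}(x_1,x_2)$ per \eqref{eq:biv_bern}.

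There is no real obstacle here; the only subtlety is bookkeeping around the convention $\binom{m}{v}=0$ when $m<v$, so that the sums in the final formula can be written uniformly from $0$ to $q_1$ (respectively $q_2$) while still agreeing with the truncated sums produced by Lemma \ref{lemma_Bernstein}. The algebraic heart is the binomial identity $\binom{q}{k}\binom{k}{i}=\binom{q}{i}\binom{q-i}{k-i}$, which converts the representation supplied by Lemma \ref{lemma_Bernstein} into the form stated in \eqref{eq:biv_degree}.
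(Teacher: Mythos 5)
Your proof is correct and takes essentially the same route as the paper: both reduce the bivariate identity to a product of two univariate Bernstein expansions of $x^i$ and then recombine. You route the univariate step through Lemma~\ref{lemma_Bernstein} and then convert via the identity $\binom{q}{k}\binom{k}{i}=\binom{q}{i}\binom{q-i}{k-i}$, whereas the paper's one-line proof applies the binomial theorem directly to $x_1^i(1-x_1+x_1)^{q_1-i}x_2^j(1-x_2+x_2)^{q_2-j}$ --- the same computation, since Lemma~\ref{lemma_Bernstein} is itself proved by that binomial expansion.
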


\begin{proof} 
The result follows by applying the Binomial Theorem to the identity $x_1^ix_2^j=x_1^i(1-x_1+x_1)^{q_1-i}x_2^j(1-x_2+x_2)^{q_2-j}$.
\end{proof}

Henceforth, we shall consider $q_1\geq n_1, q_2\geq n_2$.
Then, it follows from Lemma $\ref{lemma:xy}$ that $p(x_1, x_2)$ given in (\ref{eq:pol}) can be rewritten as
\begin{equation} \label{eq:pol_bern}
p(x_1,x_2) = \sum_{k=0}^{q_1}\sum_{l=0}^{q_2} c^{q_1, q_2}_{k, l} b_{k,l}^{(q_1,q_2)}(x_1,x_2),
\end{equation}
where
\begin{equation} \label{eq:coef_bern}
c^{q_1, q_2}_{k, l} = \sum_{i=0}^{n_1}\sum_{j=0}^{n_2} a_{i,j} \dfrac{ {k \choose i} {l \choose j}}{ {q_1 \choose i} {q_2 \choose j}}.
\end{equation}
The $c_{k,l}^{(q_1,q_2)}$ are the Bernstein coefficients and \eqref{eq:pol_bern} is the Bernstein form of $p(x_1, x_2)$. In the sequel, we denote by
\begin{equation*}
c^{(q_1,q_2)} = \min_{(k,l) \in Q_1 \times Q_2} c_{k,l}^{(q_1,q_2)} \label{eq:bern.coef_min}
\end{equation*}
the smallest Bernstein coefficient of $p(x_1,x_2)$.
\begin{theorem} \label{theo:lambda1}
If $p$ is given by \eqref{eq:pol}, then
\begin{equation}
\lambda - c^{(q_1,q_2)} \geq 0.
\end{equation}
\end{theorem}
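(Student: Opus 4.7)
The plan is to exploit two elementary facts about the bivariate Bernstein basis on the unit box: its members are non-negative on $I$, and they form a partition of unity. Once these two facts are in hand, the inequality $\lambda \geq c^{(q_1,q_2)}$ follows from a one-line convexity/averaging argument applied to the representation \eqref{eq:pol_bern}.

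First I would observe that for every $(x_1,x_2)\in I$ and every $(k,l)\in Q_1\times Q_2$,
\[
b_{k,l}^{(q_1,q_2)}(x_1,x_2) = \binom{q_1}{k}x_1^k(1-x_1)^{q_1-k}\binom{q_2}{l}x_2^l(1-x_2)^{q_2-l} \geq 0,
\]
since each factor is non-negative on $[0,1]$. Next, applying the binomial theorem twice,
\[
\sum_{k=0}^{q_1}\sum_{l=0}^{q_2} b_{k,l}^{(q_1,q_2)}(x_1,x_2) = \bigl(x_1+(1-x_1)\bigr)^{q_1}\bigl(x_2+(1-x_2)\bigr)^{q_2} = 1.
\]

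Now I would use the Bernstein representation \eqref{eq:pol_bern} together with the definition of $c^{(q_1,q_2)}$ as the minimum of the coefficients $c_{k,l}^{(q_1,q_2)}$ to bound $p$ from below pointwise: for every $(x_1,x_2)\in I$,
\[
p(x_1,x_2) = \sum_{k=0}^{q_1}\sum_{l=0}^{q_2} c_{k,l}^{(q_1,q_2)} b_{k,l}^{(q_1,q_2)}(x_1,x_2) \geq c^{(q_1,q_2)} \sum_{k=0}^{q_1}\sum_{l=0}^{q_2} b_{k,l}^{(q_1,q_2)}(x_1,x_2) = c^{(q_1,q_2)}.
\]
Taking the infimum of the left-hand side over $(x_1,x_2) \in I$ then yields $\lambda \geq c^{(q_1,q_2)}$, i.e.\ $\lambda - c^{(q_1,q_2)} \geq 0$, as required.

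There is essentially no obstacle here; the statement is the standard lower-bound property of the Bernstein form, and the proof reduces to recognising the bivariate Bernstein basis as a non-negative partition of unity on $I$. The only care needed is to make sure the binomial coefficients are placed so that the partition-of-unity identity produces the constant $1$ rather than some other normalisation, which is already built into the definition \eqref{eq:biv_bern}.
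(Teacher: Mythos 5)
Your proof is correct and follows essentially the same approach as the paper's: both rely on the non-negativity of the bivariate Bernstein basis on $I$ together with the partition-of-unity identity $\sum_{k,l} b_{k,l}^{(q_1,q_2)} \equiv 1$ to bound $p$ pointwise from below by $c^{(q_1,q_2)}$. The only cosmetic difference is that you state the partition-of-unity step explicitly, whereas the paper uses it implicitly by writing $c^{(q_1,q_2)} = \sum_{k,l} c^{(q_1,q_2)} b_{k,l}^{(q_1,q_2)}(x_1,x_2)$.
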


\begin{proof}
Since $b_{k,l}^{(q_1,q_2)}(x_1,x_2) \geq 0$ for all $(x_1,x_2) \in I$, then
\begin{align*}
c^{(q_1,q_2)} &= \sum_{k = 0}^{q_1} \sum_{l = 0}^{q_2} c^{(q_1,q_2)} b_{k,l}^{(q_1,q_2)}(x_1,x_2) \\
& \leq \sum_{k = 0}^{q_1} \sum_{l = 0}^{q_2} c^{(q_1,q_2)}_{k, l} b_{k,l}^{(q_1,q_2)}(x_1,x_2) \\
&= p(x_1, x_2),
\end{align*}
for all $(x_1, x_2) \in I$, which implies the assertion.
\end{proof}

\begin{theorem} \label{theo:lambda2}
If $p$ is given by \eqref{eq:pol}, $q_1 \geq n_1$ and $q_2 \geq n_2$, then
\[
\lambda - c^{(q_1,q_2)} \leq \gamma_1 \frac{(q_1-1)}{q_1^2} + \gamma_2 \frac{(q_2-1)}{q_2^2},
\]
\noindent where
\[
\gamma_1= \frac{1}{2} \sum_{i=0}^{n_1}\sum_{j=0}^{n_2} |a_{i,j}| i(i-1), \qquad \gamma_2 = \frac{1}{2} \sum_{i=0}^{n_1}\sum_{j=0}^{n_2} |a_{i,j}| j(j-1).
\]
\end{theorem}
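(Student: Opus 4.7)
The plan is to show, for every $(k,l) \in Q_1 \times Q_2$, that $p(k/q_1, l/q_2) - c^{(q_1,q_2)}_{k,l}$ is bounded above by the right-hand side of the claim; the theorem then follows by choosing $(k^*, l^*)$ that attains the minimum $c^{(q_1,q_2)}$ and noting that $\lambda \leq p(k^*/q_1, l^*/q_2)$, yielding
\[
\lambda - c^{(q_1,q_2)} = \lambda - c^{(q_1,q_2)}_{k^*, l^*} \leq p(k^*/q_1, l^*/q_2) - c^{(q_1,q_2)}_{k^*, l^*} \leq \gamma_1\frac{q_1-1}{q_1^2} + \gamma_2\frac{q_2-1}{q_2^2}.
\]

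To bound the pointwise difference, I would expand it via \eqref{eq:coef_bern} as $\sum_{i,j} a_{i,j}\,\Delta_{i,j}(k,l)$, where $\Delta_{i,j}(k,l) = (k/q_1)^i(l/q_2)^j - \binom{k}{i}\binom{l}{j}/[\binom{q_1}{i}\binom{q_2}{j}]$, and split each $\Delta_{i,j}$ into a pure $q_1$-error and a pure $q_2$-error using the algebraic identity $ab - \alpha\beta = a(b-\beta) + \beta(a-\alpha)$, applied to $a = (k/q_1)^i$, $\alpha = \binom{k}{i}/\binom{q_1}{i}$, $b = (l/q_2)^j$, $\beta = \binom{l}{j}/\binom{q_2}{j}$. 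Since the factors $a$ and $\beta$ both lie in $[0,1]$ for $(k,l) \in Q_1 \times Q_2$, this produces $|\Delta_{i,j}(k,l)| \leq \phi_i(k) + \psi_j(l)$, where $\phi_i(k) := (k/q_1)^i - \binom{k}{i}/\binom{q_1}{i}$ and $\psi_j(l) := (l/q_2)^j - \binom{l}{j}/\binom{q_2}{j}$ are both non-negative on $Q_1$ and $Q_2$ respectively (each factor of $\binom{k}{i}/\binom{q_1}{i} = \prod_{m=0}^{i-1}(k-m)/(q_1-m)$ is at most $k/q_1$).

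The central technical step is then the univariate pointwise bound
\[
\phi_i(k) \leq \frac{i(i-1)}{2}\cdot\frac{q_1-1}{q_1^2}, \qquad 0 \leq k \leq q_1,\; 0 \leq i \leq n_1,
\]
together with the analogous bound for $\psi_j$. Once this is in hand, summing $|a_{i,j}|(\phi_i(k) + \psi_j(l))$ over $(i,j)$ reproduces exactly $\gamma_1(q_1-1)/q_1^2 + \gamma_2(q_2-1)/q_2^2$. To prove the pointwise bound, I would telescope the two $i$-fold products defining $(k/q_1)^i$ and $\binom{k}{i}/\binom{q_1}{i}$ using the elementary identity $k/q_1 - (k-r)/(q_1-r) = r(q_1-k)/[q_1(q_1-r)]$, obtaining the non-negative representation
\[
\phi_i(k) = \sum_{r=1}^{i-1} \left(\prod_{m=0}^{r-1}\frac{k-m}{q_1-m}\right) \left(\frac{k}{q_1}\right)^{i-r-1} \frac{r(q_1-k)}{q_1(q_1-r)};
\]
bounding the inner product by $(k/q_1)^r$ and maximizing in $k$ over $[0,q_1]$ then reduces the problem to a polynomial inequality in $i$ and $q_1$.

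The main obstacle is sharpness: the target bound is attained with equality at $(i,q_1,k) = (2,2,1)$, where $\phi_2(1) = 1/4 = \binom{2}{2}(q_1-1)/q_1^2$, so no step in the chain may afford significant slack. The polynomial inequality left at the end of the telescoping argument, essentially $(i-1)^{i-1} q_1^2 \leq i^i (q_1-i+1)(q_1-1)$, is elementary but may need a separate check for the smallest cases $q_1 \in \{i, i+1\}$; alternatively, one can proceed by induction on $i$ using the recursion $\phi_{i+1}(k) = (k/q_1)\phi_i(k) + [\binom{k}{i}/\binom{q_1}{i}] \cdot i(q_1-k)/[q_1(q_1-i)]$, which may offer a cleaner path through the tight cases.
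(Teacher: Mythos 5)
Your skeleton matches the paper's exactly: write $p(k/q_1,l/q_2) - c^{(q_1,q_2)}_{k,l} = \sum_{i,j} a_{i,j}\,\delta^{q_1,q_2}_{k,l}(i,j)$ with $\delta^{q_1,q_2}_{k,l}(i,j) = (k/q_1)^i(l/q_2)^j - \binom{k}{i}\binom{l}{j}/\bigl[\binom{q_1}{i}\binom{q_2}{j}\bigr]$; split via $ab-\alpha\beta = a(b-\beta)+\beta(a-\alpha)$ with $a,\beta\in[0,1]$ (you make this explicit, the paper leaves it implicit); prove the univariate bound $0\leq\varphi^{q_1}_k(i)\leq \tfrac{q_1-1}{q_1^2}\tfrac{i(i-1)}{2}$; sum, and conclude via $\lambda\leq p(k^*/q_1,l^*/q_2)$ at the minimizing index. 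The one genuine divergence is the proof of the univariate bound, and here the paper's route is both simpler and complete: for $k\geq i$ it writes $\varphi^{q_1}_k(i) = (k/q_1)^i\bigl[1 - \prod_{r=0}^{i-1}\tfrac{1-r/k}{1-r/q_1}\bigr]$, drops the denominators, applies $\prod(1-z_r)\geq 1-\sum z_r$ to get $\varphi^{q_1}_k(i)\leq (k/q_1)^{i-1}\tfrac{i(i-1)}{2q_1}$, and finishes with $(k/q_1)^{i-1}\leq (q_1-1)/q_1$ (the case $k<i$ is handled separately and trivially). Your telescoping identity for $\varphi^{q_1}_k(i)$ is correct, but after bounding $P_r\leq (k/q_1)^r$, maximizing over real $k$, and bounding $\sum_{r=1}^{i-1}\tfrac{r}{q_1-r}\leq \tfrac{i(i-1)}{2(q_1-i+1)}$, you arrive at $(i-1)^{i-1}q_1^2\leq i^i(q_1-i+1)(q_1-1)$, which you do not prove; and the alternative induction you sketch does not obviously close either (the crude bound $(k/q_1)\leq 1$ in the induction step reproduces essentially the same residual inequality, and keeping the $(k/q_1)M_i$ term and bounding crudely fails at $i=2$, $q_1=3$). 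So there is a real gap at the heart of the argument. The fix is either to verify the residual polynomial inequality carefully for all $q_1\geq i\geq 2$, or, more simply, to replace the calculus maximization with the paper's step: extract the factor $(k/q_1)^{i-1}\leq (q_1-1)/q_1$ early and bound the remaining bracket by $\tfrac{i(i-1)}{2q_1}$ using the elementary product inequality.
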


\begin{proof} 
For any real function $f(x_1,x_2)$, define its Bernstein approximation on $I$ by
\begin{equation}\label{eq:bern_appr}
B_{q_1,q_2}(f;x_1,x_2)=\sum_{k=0}^{q_1}\sum_{l=0}^{q_2}f\left(\frac{k}{q_1},\frac{l}{q_2}\right)b_{k,l}^{(q_1,q_2)}(x_1,x_2).
\end{equation}

For $0 \leq i \leq n_1$ and $0 \leq j \leq n_2$, let $\delta^{q_1, q_2}_{k,l}(i,j)$, $(k,l) \in Q_1\times Q_2$, be the Bernstein coefficients of the polynomial $B_{q_1,q_2}(x_1^i x_2^j;x_1,x_2) - x_1^i x_2^j$, i.e.,
\begin{equation}\label{eq:diff}
B_{q_1,q_2}(x_1^i x_2^j; x_1,x_2) - x_1^ix_2^j = \sum_{k=0}^{q_1} \sum_{l=0}^{q_2} \delta^{q_1, q_2}_{k,l}(i,j) b_{k,l}^{(q_1,q_2)}(x_1,x_2).
\end{equation}

Then, from Lemma \ref{lemma:xy} and $(\ref{eq:bern_appr})$ , it follows that
\begin{equation} \label{eq:delta.def}
\delta^{q_1, q_2}_{k,l}(i,j) = \bigg(\frac{k}{q_1}\bigg)^i\bigg(\frac{l}{q_2}\bigg)^j - \dfrac{{k\choose i}{l \choose j}}{{q_1 \choose i}{q_2 \choose j}},
\end{equation}
$k \in Q_1, l \in Q_2$.

For any fixed $0 \leq i \leq n_1$ and $0 \leq j \leq n_2$, we can prove that
\begin{equation} \label{eq:delta.ineq}
0 \leq \delta^{q_1, q_2}_{k,l}(i,j) \leq \bigg( \dfrac{q_1 - 1}{q_1^2} \bigg) \dfrac{i (i - 1)}{2} + \bigg( \dfrac{q_2 - 1}{q_2} \bigg) \dfrac{j (j - 1)}{2},
\end{equation}
for all $k \in Q_1$ and $l \in Q_2$. In order to prove $(\ref{eq:delta.ineq})$, it suffices to show that
\begin{align}
0 \leq \varphi^{q_1}_{k}(i) = \bigg(\frac{k}{q_1}\bigg)^i - \dfrac{{k \choose i}}{{q_1 \choose i}} & \leq \bigg(\dfrac{q_1 - 1}{q_1^2}\bigg) \dfrac{i(i-1)}{2}, \qquad \text{for all } k \in Q_1, \label{eq:delta.ineq1}\\
0 \leq \varphi^{q_2}_{l}(j) = \bigg(\frac{l}{q_2}\bigg)^j - \dfrac{{l \choose j}}{{q_2 \choose j}} & \leq \bigg(\dfrac{q_2 - 1}{q_2^2}\bigg) \dfrac{j(j-1)}{2}, \qquad \text{for all } l \in Q_2. \label{eq:delta.ineq2}
\end{align}

Since (\ref{eq:delta.ineq2}) is essentially the same as (\ref{eq:delta.ineq1}), we only present the proof of (\ref{eq:delta.ineq1}). 
Notice that (\ref{eq:delta.ineq1}) clearly holds for $i = 0$, $i = 1$, $k = 0$ and $k = q_1$. Thus, let us consider $1 \leq k \leq q_1 - 1$ and $i \geq 2$.

If $k < i$, then
\begin{equation*}
0 \leq \varphi^{q_1}_{k}(i) = \bigg( \dfrac{k}{q_1} \bigg)^i \leq \bigg( \dfrac{k}{q_1} \bigg)^2 \leq \bigg( \dfrac{q_1 - 1}{q_1} \bigg) \bigg( \dfrac{i - 1}{q_1}\bigg) \leq \bigg( \dfrac{q_1 - 1}{q_1^2}\bigg) \dfrac{i (i - 1)}{2}.
\end{equation*}

If $k \geq i$, then
\begin{equation*}
\varphi^{q_1}_{k}(i) = \bigg(\frac{k}{q_1}\bigg)^i  - \dfrac{{k \choose i}}{{q_1 \choose i}} = \bigg(\dfrac{k}{q_1} \bigg)^i \bigg[1 - \prod_{r = 0}^{i-1} \dfrac{\big(1 - r/k\big)}{\big(1 - r/q_1\big)} \bigg].
\end{equation*}

Since $0 \leq (1 - r/k) \leq (1 - r/q_1) \leq 1$ for all $r = 0, \ldots, i-1$, it follows that
\begin{equation}
0 \leq \varphi^{q_1}_{k}(i) \leq \bigg(\dfrac{k}{q_1} \bigg)^i \bigg[1 - \prod_{r = 0}^{i-1} \bigg(1 - \frac{r}{k} \bigg) \bigg]. \label{eq:varphi_q1_k}
\end{equation}

Using the fact that, for any $z_1, \ldots, z_m \in [0,1]$, we have
$$
\prod_{i=1}^m (1 - z_i) \geq  1 - \sum_{i = 1}^m z_i,
$$
it follows from (\ref{eq:varphi_q1_k}) that
\begin{equation}
0 \leq \varphi^{q_1}_{k}(i) \leq \bigg(\dfrac{k}{q_1} \bigg)^{i} \dfrac{i(i-1)}{2k} = \bigg(\dfrac{k}{q_1} \bigg)^{i-1} \dfrac{i(i-1)}{2q_1} \leq \bigg( \dfrac{q_1 - 1}{q_1^2} \bigg) \dfrac{i(i-1)}{2},
\end{equation}
which finishes the proof of (\ref{eq:delta.ineq1}) and consequently proves \eqref{eq:delta.ineq}.

Considering the form \eqref{eq:pol} of $p(x_1, x_2)$ and the Bernstein approximation \eqref{eq:bern_appr}, we obtain
\begin{equation*}
B_{q_1, q_2}(p; x_1, x_2) - p(x_1, x_2) = \sum_{i = 0}^{n_1} \sum_{j = 0}^{n_2} a_{i, j} \bigg[B_{q_1, q_2} (x_1^i x_2^j; x_1, x_2) - x_1^i x_2^j \bigg],
\end{equation*}
which implies, using \eqref{eq:diff},
\begin{equation} \label{eq:diff1}
B_{q_1, q_2}(p; x_1, x_2) - p(x_1, x_2) = \sum_{k = 0}^{q_1} \sum_{l = 0}^{q_2} \Bigg( \sum_{i = 0}^{n_1} \sum_{i = 0}^{n_2} a_{i,j} \delta_{k,l}^{q_1, q_2}(i, j) \Bigg) b_{k, l}^{q_1, q_2} (x_1, x_2).
\end{equation}

Now, considering the form \eqref{eq:pol_bern}, we have
\begin{equation} \label{eq:diff2}
B_{q_1, q_2}(p; x_1, x_2) - p(x_1, x_2) = \sum_{k = 0}^{q_1} \sum_{l = 0}^{q_2} \Bigg( p\left(\frac{k}{q_1},\frac{l}{q_2}\right) - c_{k,l}^{q_1, q_2} \Bigg) b_{k, l}^{q_1, q_2} (x_1, x_2).
\end{equation}

Equating the Bernstein coefficients of expressions \eqref{eq:diff1} and \eqref{eq:diff2}, and using \eqref{eq:delta.ineq}, we conclude that
\begin{align*}
p\left(\frac{k}{q_1}, \frac{l}{q_2} \right) &= c^{q_1, q_2}_{k, l} + \sum_{i=0}^{n_1} \sum_{j = 0}^{n_2} a_{i,j} \delta_{k,l}^{q_1, q_2}(i,j) \\
& \leq c^{q_1, q_2}_{k, l} + \sum_{i=0}^{n_1} \sum_{j = 0}^{n_2} |a_{i,j}| \delta_{k,l}^{q_1, q_2}(i,j) \\
& \leq c^{q_1, q_2}_{k, l} + \gamma_1 \frac{(q_1-1)}{q_1^2} + \gamma_2 \frac{(q_2-1)}{q_2^2},
\end{align*}
from which follows the result.
\end{proof}

From Theorems \ref{theo:lambda1} and \ref{theo:lambda2}, it follows that $c^{(q_1,q_2)} \to \lambda$ as $q_1 \to \infty$ and $q_2 \to \infty$ and, therefore, Theorem \ref{theo:main} follows as a corollary.


\section{Concluding remarks}
\label{sec:final}
The representation of polynomials that are positive on the unit interval or any compact subset of $\mathbb{R}^n$ is an important subject with direct applications related to moment problems.
See \cite{lasserre2006} for more on this relation.
The authors searched for the proof of Theorem \ref{theo:main} precisely to prove that the moment problem on the unit square has a solution---i.e. there is a finite representing measure for a sequence of moments---if and only if there is a positive linear functional for all polynomials that are nonnegative on the unit square.
Not being aware of the work of Lasserre \cite{lasserre2006}, where the result similar to the one we wanted to prove is demonstrated, we used the univariate results from Bernstein \cite{bernstein1915} and Hausdorff \cite{hausdorff1921} as a stepping stone to build the proof for the unit square as described in Section \ref{sec:main}.

Once our proof was concluded, we have found references \cite{garloff1986} and \cite{rivlin1970}, which provided a demonstration for a similar result.
Eventually we came across the book by Lasserre \cite{lasserre2006}, where we found a theorem that is similar to Theorem \ref{theo:main}, proved by Cassier \cite{cassier1984}.
We briefly present such result, giving the formulation of \cite{lasserre2006}.
Let $\mathbb{R}[\mathbf{x}]=\mathbb{R}[x_1,\ldots,x_n]$ be the ring of real multivariate polynomials and $\mathbb{K}$ be a basic semi-algebraic set, subset of $\mathbb{R}^n$

\begin{equation}\label{eq:set}
\mathbb{K}:= \{\mathbf{x}\in\mathbb{R}^n: g_j(\mathbf{x})\geq0, j=1,\ldots,m\},
\end{equation}

\noindent
where $g_j(\mathbf{x})\in\mathbb{R}[\mathbf{x}]$, $j=1,\ldots,m$.
Cassier \cite{cassier1984} has proven the following theorem.

\begin{theorem}
Let $g_j(\mathbf{x})\in\mathbb{R}[\mathbf{x}]$ be affine for every $j=1,\ldots,m$ and assume that $\mathbb{K}$, as defined by \eqref{eq:set}, is compact with nonempty interior.
If $f\in\mathbb{R}[\mathbf{x}]$ is strictly positive on $\mathbb{K}$ then

\[
f = \sum_{\alpha\in\mathbb{N}^m}c_{\alpha}g_1^{\alpha_1}\ldots g_m^{\alpha_m},
\]

\noindent
for finitely many nonnegative scalars $(c_{\alpha})$.
\end{theorem}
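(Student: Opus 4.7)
The plan is to prove this via the standard route for Archimedean positivstellens\"atze: a Hahn--Banach separation argument, combined with the fact that a linear functional nonnegative on the relevant cone must be represented by a positive measure supported on $\mathbb{K}$.

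\textbf{Step 1 (Archimedean property).} I would first exploit the hypotheses on $\mathbb{K}$---compactness, nonempty interior, affine defining inequalities---to establish that the preordering
\[
T = \Bigl\{\sum_{\alpha\in\mathbb{N}^m} c_\alpha\, g_1^{\alpha_1}\cdots g_m^{\alpha_m} : c_\alpha\geq 0,\ \text{finitely many nonzero}\Bigr\}
\]
is Archimedean in $\mathbb{R}[\mathbf{x}]$; that is, for every $p\in\mathbb{R}[\mathbf{x}]$, some $N\in\mathbb{N}$ gives $N\pm p\in T$. The affine case makes this particularly tractable: since $\mathbb{K}$ is a compact polytope contained in some ball $\{\|\mathbf{x}\|_\infty\leq R\}$, one can exhibit the constants $R\pm x_k$ as elements of $T$ by bootstrapping from the defining inequalities (using that positive combinations of the $g_j$'s witness the boundedness of $\mathbb{K}$); then an inductive argument on degree shows every polynomial is dominated by a constant modulo $T$.

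\textbf{Step 2 (Separation).} Suppose for contradiction that $f>0$ on $\mathbb{K}$ but $f\notin T$. Since $T$ is a convex cone in the finite-dimensional subspace $\mathbb{R}[\mathbf{x}]_{\leq d}$ for $d$ sufficiently large to contain $f$ and the finitely many $g^\alpha$'s of interest, and the Archimedean property forces $T\cap\mathbb{R}[\mathbf{x}]_{\leq d}$ to be closed in the natural topology, Hahn--Banach produces a linear functional $L:\mathbb{R}[\mathbf{x}]\to\mathbb{R}$ with $L(h)\geq 0$ for every $h\in T$ but $L(f)<0$. After normalizing so that $L(1)=1$, the Archimedean property $N-p^2\in T$ (for suitable $N$) yields uniform bounds $|L(p)|\leq C(p)$, so $L$ is a state on $(\mathbb{R}[\mathbf{x}],T)$.

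\textbf{Step 3 (Representing measure and contradiction).} I would then invoke a Riesz/Haviland-type representation theorem for Archimedean states: such an $L$ is integration against a Borel probability measure $\mu$ on the ``character space'' of $(\mathbb{R}[\mathbf{x}],T)$, which, because $T$ is generated by the $g_j$'s, is exactly $\mathbb{K}$. Hence $L(f)=\int_{\mathbb{K}} f\,d\mu$. But $\mu$ is a nonzero probability measure and $f>0$ on $\mathbb{K}$, so $L(f)>0$, contradicting $L(f)<0$. This forces $f\in T$, which is the claimed representation.

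The main obstacle is Step 3: constructing the representing measure from the abstract state. The cleanest route is to exhibit a GNS-type construction or to apply Haviland's theorem, both of which require the Archimedean property established in Step 1 to control the moment growth and guarantee that the support of $\mu$ sits inside $\mathbb{K}$ (and not on some projective boundary). All other steps---the normalization of the $g_j$, the finite-dimensional separation, and the final strict-positivity contradiction---are essentially formal once Step 1 is in hand.
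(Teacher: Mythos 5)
The paper does not prove this theorem; it is quoted from Cassier~\cite{cassier1984} in the concluding remarks to contrast with the paper's own Theorem~\ref{theo:main}, so there is no proof in the paper to compare your proposal against. Judged on its own merits, your proposal has a genuine gap. The cone $T$ you define contains only nonnegative combinations of products $g_1^{\alpha_1}\cdots g_m^{\alpha_m}$; it is a \emph{preprime}, not a quadratic module or preordering, and it does not contain the squares $p^2$ of arbitrary polynomials. The machinery you invoke in Step~3 --- a GNS construction, or a Haviland-type representing measure --- is precisely the machinery calibrated to cones that \emph{do} contain sums of squares: GNS needs the positivity of $L(p^2)$ to build an inner product, and Haviland requires $L$ to be nonnegative on \emph{every} polynomial that is nonnegative on $\mathbb{K}$, a much larger cone than $T$. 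Your functional $L$ only enjoys $L|_T\geq 0$, which is too weak for either route. Moreover, in Step~2 the reduction to a finite-dimensional truncation $T\cap\mathbb{R}[\mathbf{x}]_{\leq d}$ is circular (you cannot bound $d$ in advance without already knowing which products $g^\alpha$ appear in the representation of $f$), and the closedness of that truncated cone is asserted, not proved, and is not a consequence of the Archimedean property.

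The correct abstract argument for a preprime such as $T$ is the Kadison--Dubois (Krivine--Stone) representation theorem: assuming $f\notin T$, one enlarges $T$ by Zorn's lemma to a maximal Archimedean preprime $T'$ with $-f\in T'$, shows that the support $T'\cap(-T')$ is a prime ideal with Archimedean ordered quotient, and hence obtains a ring homomorphism $\varphi:\mathbb{R}[\mathbf{x}]\to\mathbb{R}$ that is $T$-nonnegative with $\varphi(f)\leq 0$. Since $T$-nonnegative homomorphisms on $\mathbb{R}[\mathbf{x}]$ are evaluations at points of $\mathbb{K}$, this contradicts $f>0$ on $\mathbb{K}$. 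This produces a point, not a measure, and avoids any closure or finite-dimensionality issues. Alternatively, the special affine/polytope case admits a more elementary proof via P\'olya's theorem~\cite{polya1928}: one affinely embeds $\mathbb{K}$ into a simplex using the $g_j$'s, homogenizes $f$, and applies P\'olya's positivity result to obtain the nonnegative coefficients directly. Your Step~1 (Archimedean property of $T$) is fine and is the part common to all of these approaches; the weight of the proof sits in Step~3, which as written would fail.
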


If $\mathbf{x}=(x_1,x_2)\in\mathbb{R}^2$, $g_1(\mathbf{x})=x_1$, $g_2(\mathbf{x})=1-x_1$, $g_3(\mathbf{x})=x_2$ and $g_4(\mathbf{x})=1-x_2$, then $\mathbb{K}=[0,1]\times[0,1]=I$.
When $f$ is a positive polynomial on $\mathbb{K}$ the theorem applies and there are nonnegative $c_{\alpha}$ such that

\[
f(x_1,x_2)=\sum_{\alpha\in\mathbb{N}^2}c_{\alpha}x_1^{\alpha_1}(1-x_1)^{\alpha_2}x_2^{\alpha_3}(1-x_2)^{\alpha_4}.
\]

The main difference between the above Theorem and Theorem \ref{theo:main} is that the latter constructively derives the positive integers $q_1$ and $q_2$, the degrees of the Bernstein representation.

Both strategies developed in Sections \ref{sec:main} and \ref{sec:alt} can be generalized to prove similar theorems for polynomials that are positive over arbitrary hypercubes. 

\bibliographystyle{plainnat}

\end{document}